\newcommand{\n}{\noindent}
\newcommand{\lr}{\longrightarrow}
\theoremstyle{plain}
\newtheorem{definition}{Definition}[section]
\newtheorem{lemma}[definition]{Lemma}
\newtheorem{thm}[definition]{Theorem}
\newtheorem{cor}[definition]{Corollary}
\newtheorem{rem}[definition]{Remark}
\def\cplxi{{\mskip2mu\sf{i}\mskip2mu}}
\def\Id{{\mathbf{I}}}
\newcommand{\weg}[1]{}
\newcommand{\om}[1]{\stackrel{#1}{\kappa}}
\begin{document}
\title{Some Remarks on Nijenhuis Bracket, Formality, and K\"ahler Manifolds}
\author{Paolo de Bartolomeis and Vladimir S.
Matveev}
\date{\today}
\maketitle
\section{introduction}
A differentiable manifold $M$\, is said to be {\em formal} if the algebra $\wedge^{*}(M)$\, of the differential forms on $M$\, is quasi isomorphic to its DeRham cohomology. (We recall that a morphism between Differential Graded Algebras is said to be a {\em quasi isomorphism} if it induces an isomorphism in cohomology and that two DGA's are said to be {\em quasi isomorphic} if they are equivalent with respect to the equivalence relation generated by quasi isomorphisms (cf. also \cite{PdB1}\,).)\\
It is well known that (cf. \cite{DGMS}\,)

\noindent\fbox{\begin{minipage}{.3\textwidth}$(\wedge^{*}(M),\,d)\,\,\, {\rm is\,\,formal}$\end{minipage}}
$\Longrightarrow$
\fbox{\begin{minipage}{.6\textwidth} {from $H^{*}(M,{\mathbb{R}})$\, we can reconstruct}\\ 
(via its minimal model, Postnikov towers etc...)\\ 
the whole rational \\ (i.e. all the cofinite) homotopy theory of $M$\,.\end{minipage}}

\n One { (actually, almost the only effective)  way to get formality is to be able to produce a suitable derivation  $\delta$ on $\wedge^{*}(M)$\,,   $\delta:\wedge^k \to \wedge ^{k+1}$ (for $k=0,...,n$), satisfying $\delta^{2}=0$\,and such that 
{\em $d\delta$-lemma} holds, i.e.
$(Ker\,d\cap Ker\,\delta)\cap (Im\,d+  Im\,\delta)=Im\,d\delta.$
 }

More precisely, the  following general statements holds: 
\begin{thm} [cf. \cite{DGMS}]
Let $M$ be a smooth manifold with a derivation 
$\delta:\wedge^k \to \wedge ^{k+1}$ (for $k=0,...,n$), satisfying $\delta^{2}=0$ such that  $d\delta$-lemma holds. Then
$$H(\wedge^{*}(M),\,d)=(Ker\,{ d}\cap Ker\,\delta)/Im\,d\delta$$
$$H(\wedge^{*}(M),\,\delta)=(Ker\,d\cap Ker\,\delta)/Im\,d\delta$$
and so $(\wedge^{*}(M),\,d)$\, and $(\wedge^{*}(M),\,\delta)$\, are {\em formal}.
\end{thm}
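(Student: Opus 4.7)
My plan is to derive all three conclusions from the zig-zag of DGA morphisms
\[
(\wedge^*(M),\, d) \;\xleftarrow{\;i\;}\; (\ker\delta,\, d) \;\xrightarrow{\;p\;}\; \bigl(H_\delta(\wedge^*(M)),\, 0\bigr),
\]
where $i$ is inclusion and $p$ is projection to $\delta$-cohomology. Once both arrows are shown to be quasi-isomorphisms and the right-hand side is seen to carry the zero differential, formality of $(\wedge^*(M),d)$ follows at once; the analogous zig-zag with $d$ and $\delta$ swapped yields the twin statements for $\delta$, since the $d\delta$-lemma is symmetric in the two operators (given the implicit anticommutation $d\delta+\delta d=0$, which is what makes $\mathrm{Im}\,d\delta\subseteq\ker d\cap\ker\delta$ in the first place).

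The argument splits into three steps, each an application of the $d\delta$-lemma. First, I would check that $(\ker\delta,d)$ is really a sub-DGA and that $H_\delta$ inherits a differential $\bar d$ which vanishes identically: for $\alpha\in\ker\delta$, the form $d\alpha$ lies in $\ker d\cap\ker\delta\cap\mathrm{Im}\,d$, so by the lemma it is in $\mathrm{Im}\,d\delta\subseteq\mathrm{Im}\,\delta$. Second, I would compute $H(\ker\delta,d)\cong(\ker d\cap\ker\delta)/\mathrm{Im}\,d\delta$ directly, the lemma telling us that every $d$-boundary $d\gamma$ with $\delta\gamma=0$ is already $d\delta$-exact. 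Third, I would show $i_*$ is an isomorphism: a $d$-class $[\alpha]_d$ can be represented by an element of $\ker\delta$ after subtracting a $d$-coboundary, precisely because $\delta\alpha\in(\ker d\cap\ker\delta)\cap\mathrm{Im}\,\delta\subseteq\mathrm{Im}\,d\delta$, so $\delta\alpha=d\delta\xi=-\delta d\xi$ and $\alpha+d\xi$ is the desired representative; injectivity is the dual statement. The argument for $p_*$ is entirely parallel.

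The main delicate point, the one I expect to require care, is to arrange in each invocation of the $d\delta$-lemma that the element under study lies in \emph{all three} of $\ker d$, $\ker\delta$, and $\mathrm{Im}\,d+\mathrm{Im}\,\delta$. Missing any one of these conditions breaks the implication, so the algebraic manipulations must be set up correctly before the hypothesis can be applied. Modulo this bookkeeping everything is straightforward, and both cohomology identifications together with the two formality statements drop out of the same zig-zag.
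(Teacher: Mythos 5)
Your proposal is correct, and it is precisely the classical zig-zag argument $(\wedge^{*}(M),d)\xleftarrow{\;i\;}(\ker\delta,d)\xrightarrow{\;p\;}(H_{\delta},0)$ of Deligne--Griffiths--Morgan--Sullivan that the paper is invoking: the theorem is stated here only with a citation to \cite{DGMS} and no proof is given in the paper itself, so there is nothing to diverge from. You are also right to flag that the anticommutation $d\delta+\delta d=0$ must be treated as an implicit hypothesis (the paper's Remark \ref{R:B} indeed lists it as a separate assumption), since without it $\mathrm{Im}\,d\delta$ need not lie in $\ker d\cap\ker\delta$ and the statement of the $d\delta$-lemma degenerates.
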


An example of such a situation is provided by Kähler manifolds: in this case, $\delta=d^{c}:= J^{-1} d J$, where $J$ is the complex structure  (cf. again \cite{DGMS})\,.

We first show (Lemma \ref{L:B}, Remark \ref{R:B})
that the  derivation  $\delta$\, satisfying properties above 
 must be of the form $\delta=d_{R}:=RdR^{-1}$\,, with { $R\in End(TM)$\, (i.e., $R$ is a field of non degenerate 
 linear transformations of the tangent spaces)}.

 { Then, we prove (Lemma \ref{nijen}) that the supercommutation of $d$\, and $\delta=d_{R}$\, (which is a natural, essentially necessary condition to get a $d\delta$-lemma)\, amounts to $N_{R}\equiv 0$\,, $N_{R}$\, being the Nijenhuis tensor of $R$\,. Then,  we are looking for 
 sufficient conditions that ensure the $dd_{R}$-lemma holds. 
 For $R$ self adjoint with respect to a Riemannian metric,  it is done in Section \ref{riem}. For $R$ compatible with an almost symplectic structure this is done in Section \ref{scew}.  
Finally, we show that, if ${}^{t}\!R=-R$\, and $det\,R\equiv1$\,, then
$$N_{R}\equiv 0\,\,\Longrightarrow\,\,N_{J}\equiv 0$$
where $J$\,  is the orthogonal component of $R$\,,  in its polar decomposition and this also provides a new characterization of K\"ahler structures. } 

\section{Preliminary remarks: on the space of derivations}
\n We begin with the following
\begin{lemma}\label{L:B}
Let $M$\, be a smooth compact manifold of dimension $n$\, and let\\ 
$\delta\in End(\wedge^{*}(M))$\,such that:  
\begin{itemize}
\item[a.] $\delta(\alpha\wedge\beta)=\delta\alpha\wedge\beta+(-1)^{|\alpha|}\alpha\wedge{ \delta}\beta$\, and $|\delta|=1$\,, i.e. $\delta\,:\,\wedge^{p}(M)\,\lr\,\wedge^{p+1}(M)$
 \item[b.] $Ker\,\delta\cap\wedge^{0}(M)={\mathbb{R}}$
\item[c.] $\delta^{2}=0$\,;
\end{itemize}
then $R\,:\,X\,\mapsto\,\delta_{X}$\, belongs to $End(TM)$\, and $\delta=d_{R}:=RdR^{-1}$\, (where, clearly, $\delta_{X}\,:\,f\,\mapsto\,\delta f(X)$\,) 
\end{lemma}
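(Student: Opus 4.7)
The strategy is three-fold: verify that $R$ is a well-defined $C^\infty(M)$-linear endomorphism of $TM$; argue that $R$ is pointwise invertible (the hard part); and derive the identity $\delta=RdR^{-1}$ from a uniqueness principle for degree-$1$ graded derivations of $\wedge^*(M)$ squaring to zero. The first point is immediate: for a vector field $X$ and $f,g\in C^\infty(M)$, condition (a) yields $\delta(fg)(X)=(\delta f)(X)\,g+f\,(\delta g)(X)$, so $\delta_X$ is a point-derivation of $C^\infty(M)$ and hence a vector field $R(X)$; since $\delta f\in\wedge^1(M)$ is already $C^\infty$-linear in its argument, $R$ is a tensor. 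Writing $A:={}^tR$, this says $\delta f=A\,df$ on functions, or equivalently $(\delta f)(X)=df(RX)$.

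The main obstacle is proving that $R_p$ is invertible at every point $p\in M$. The strategy is the contrapositive: a non-trivial $\ker R_p$ ought to produce a non-constant $f$ with $\delta f\equiv 0$, contradicting (b). Pointwise, $\ker R_p\neq 0$ is equivalent to $\mathrm{Im}\,R_p$ being a proper subspace, whose annihilator in $T^*_pM$ is where we want to place $df$. To promote this pointwise defect into an honest non-constant $\delta$-closed function one must use (c): iterating the relation $\delta(A\,df)=0$ and the derivation rule yields integrability-type constraints on $A$, and combining these with a Frobenius argument on the kernel distribution, together with the compactness of $M$ built into the hypotheses, one produces the desired non-constant $\delta$-closed function. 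I expect this interplay between (b), (c) and compactness to be the true technical heart of the lemma.

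Once $R$ is known to be pointwise invertible it induces a graded algebra automorphism of $\wedge^*(M)$, and I would form the conjugate $d':=R^{-1}\delta R$. Being the conjugate of a graded derivation by an algebra automorphism, $d'$ is itself a graded derivation of degree $1$; $(d')^2=R^{-1}\delta^2 R=0$; and on functions $d'f=R^{-1}(R\,df)=df$. The identity $\delta=RdR^{-1}$ then reduces to the following uniqueness statement, which I would prove last: the only degree-$1$ graded derivation $D$ of $\wedge^*(M)$ with $D^2=0$ and $Df=df$ for every $f\in C^\infty(M)$ is $D=d$. Indeed, $D-d$ is a degree-$1$ graded derivation vanishing on $\wedge^0$, hence $C^\infty$-linear, hence of the form $i_L$ for some vector-valued $2$-form $L\in\Omega^2(M,TM)$; but then $0=D^2f=(d+i_L)(df)=i_L(df)$ gives $df(L(X,Y))=0$ for all $f,X,Y$, forcing $L\equiv 0$ and so $D=d$. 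Applied to $d'$ this yields $d'=d$, i.e.\ $\delta=RdR^{-1}$, completing the proof.
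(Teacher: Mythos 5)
Your first and last steps coincide with the paper's own argument. The paper's proof is exactly three lines: tensoriality of $R$ is ``evident''; $R$ is nondegenerate ``by (b)''; and $\delta$ agrees with $d_R$ on $\wedge^{0}(M)$, which ``together with (a), (c), is sufficient to insure $\delta\equiv d_R$''. Your uniqueness principle --- a degree-one graded derivation $D$ with $D^{2}=0$ and $Df=df$ on functions must equal $d$, because $D-d$ is tensorial on $\wedge^{1}(M)$, hence given by some $L\in\wedge^{2}(M)\otimes TM$, and $0=D^{2}f=df(L(\cdot,\cdot))$ forces $L\equiv 0$ --- is a correct and useful expansion of that last unproved sentence, and the conjugation trick $d'=R^{-1}\delta R$ is the right way to invoke it.

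The genuine gap is the invertibility step, which you explicitly leave as a plan rather than a proof (``I expect this interplay \dots to be the true technical heart''). As sketched, the plan would not go through: $\ker R$ need not have constant rank, so there is no ``kernel distribution'' to feed into a Frobenius argument; and, more fundamentally, degeneracy of $R_{p}$ at a point (or on any set with empty interior) does not produce a non-constant $f$ with $\delta f\equiv 0$, because $\delta f=0$ requires $df$ to annihilate $\mathrm{Im}\,R$ at \emph{every} point of $M$, and hence $df=0$ wherever $R$ is invertible; your contrapositive therefore only has a chance of biting when $R$ degenerates on a large open set, and even then producing a globally defined $\delta$-closed non-constant function requires work you have not done. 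To be fair, the paper supplies no argument here either --- it simply asserts nondegeneracy as an immediate consequence of (b) --- so you have correctly isolated the one step of the lemma that deserves scrutiny; but neither your sketch nor the paper's one-liner closes it. A complete writeup would have to exploit condition (c) on $\wedge^{1}(M)$ (not merely on functions) to constrain $R$ where it degenerates, or else fall back on the stronger hypotheses of Remark~\ref{R:B}, under which nondegeneracy does follow easily.
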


\begin{proof} { The linearity of $R$ is evident. By (b), $R$  is nondegenerate.} 
{ In order to prove $\delta= d_R$, let us note that} for any $f\in\wedge^{0}(M),\, X\in TM$\,, we have:
$$d_{R}f(X)=df(RX)=RXf=\delta_{X}f=(\delta f)(X)$$
i.e. $\delta$\, coincides with $d_{R}$\, on $\wedge^{0}(M)$\,; this, together with {(a)}, (c), is sufficient  to insure  $\delta\equiv d_{R}$\,.
\end{proof}

\begin{rem} \label{R:B} 
Assume $\delta$\, satisfies (a), (c) of lemma \ref{L:B}. Suppose 
\begin{itemize} 
\item the $d\delta$-lemma holds, i.e.:
$$(Ker\,d\cap Ker\,\delta)\cap (Im\,d+  Im\,\delta)=Im\,d\delta$$
\item $d\delta+\delta d=0.$
\end{itemize}
Then also (b) of lemma \eqref{L:B} is fulfilled.
\end{rem}
\begin{proof} {  Indeed, } if $f\in Ker\,\delta\cap\wedge^{0}(M)\,\,,\,\,f\ne const$\,, then
$$0\ne df\in (Ker\,d\cap Ker\,\delta)\cap (Im\,d+ Im\,\delta)\,,$$
{ contradicting}   
$df\notin Im\,d\delta$.  \end{proof} 

\vspace{0.3cm}

\n For any $S\in End(TM)$\,, we define the {\em Nijenhuis tensor}\, of $S$\, as
the element 
$$N_{S}\in\wedge^{2}(M)\otimes TM$$
given by
$$N_{S}(X,\,Y):=[SX,\,SY]+S^{2}[X,\,Y]-S[SX,\,Y]-S[X,\,SY]\,;$$

It is known  (and follows direct from definitions),  that 

\begin{itemize}
\item $N_{\Id+S}=N_{S}$ \ (where $\Id:TM\to TM$ is the identity) 

\item for any $\lambda\in C^{\infty}(M,\,{\mathbb{R}})$\,, $N_{\lambda \Id}\equiv 0$

\item if $R\in End(TM)$\, then
$$N_{R^{-1}}(X,\,Y)=R^{-2}N_{R}(R^{-1}X,\,R^{-1}Y)\,.$$
\end{itemize}

\vspace{0.3cm}

\n Let $V$\, be a vector space. For any 
 $L\in End(V)$ we consider 
$$\tau(L)\in End(\wedge V^{*})$$ defined as follows:
\begin{equation} \label{tau} (\tau(L)(\alpha))(v_{1},\,...\,,\,v_{p}):=\sum_{h=1}^{p}\alpha(v_{1},\,...\,,L(v_{h}),\,...\,,\,v_{p})\,.\end{equation}
We recall that a {\em Differential Graded Lie Algebra} (DGLA) is a graded vector space
$${\mathfrak{g}}=\bigoplus_{j\in{\mathbb{Z}}}{\mathfrak{g}}_{j}$$
together with a bilinear map $[\,,\,]\,:\,{\mathfrak{g}}\times{\mathfrak{g}}\,\lr\,{\mathfrak{g}}$\, and a degree one graded derivation $d$\, on ${\mathfrak{g}}$\, in such a way that:
\begin{itemize}
\item $[{\mathfrak{g}}_{j},\,{\mathfrak{g}}_{k}]\subset {\mathfrak{g}}_{j+k}$
\item for homogeneous elements $a,\,b,\,c$\,, we have:
$$
\begin{array}{rcc} [a,\,b]=& -(-1)^{|a||b|}[b,\,a] &  \cr
[a,\,[ b,\,c ] ]= & [ [a,\,b],\,c]+(-1)^{|a||b|}[b,\,[a,\,c] ] & \ \  \mbox{ { Jacobi identity}}\\
d[a,\,b]=&[da,\,b]+(-1)^{|a|}[a,\,db].&  \end{array}$$
\item $d^{2}=0$
\end{itemize}

\vspace{0.3cm}

For example, there is a natural structure of $DGLA$\, on $End(\wedge^{*}(M))$:\\  the { gradation is obvious: $|P|= |P\alpha| - |\alpha|$ },\\ 
and the bracket  $[\ , \, ]$ and the derivation  (we use the letter  $\daleth$ for it) are given by
\begin{itemize}
\item $[P,\,Q]:=P\circ Q-(-1)^{|P||Q|}Q\circ P$, 
\item $\daleth P:=[d,\,P]$. 
\end{itemize}

\vspace{0.3cm}

Let us recall the lemma (cf. e.g. \cite{PdBAT}\,),  

\begin{lemma} Let $R\in Aut(TM)$. Then, 
$$
d_{R}= d+[\tau(S),\,d]-r(R), 
$$

where $\tau$ is defined by (\ref{tau}), 
$S:= R-\Id$, and 
$r(R)$\, is a zero order differential operator  quadratic in $S$  defined as follows:\\
${}$\hspace{1cm}$\bullet$\, $r(R)\equiv 0$\, on $\wedge^{0}(M)$\\
${}$\hspace{1cm}$\bullet$\, for $\alpha\in\wedge^{1}(M)$  
$r(R)(\alpha)(X,\,Y)):=\alpha(R^{-1}N_{R}(X,\,Y))$,\\
${}$\hspace{1cm}$\bullet$\, extend to general $\alpha$ as skew-symmetric derivation.
\end{lemma}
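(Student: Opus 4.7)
My plan is to exploit the fact that both sides of the claimed identity
\[
d_{R}=d+[\tau(S),d]-r(R)
\]
are graded derivations of $\wedge^{*}(M)$ of degree $+1$. The left hand side $d_{R}=RdR^{-1}$ is a derivation because $R$ and $R^{-1}$ act as algebra isomorphisms on $\wedge^{*}(M)$ (extending $R\in\mathrm{Aut}(TM)$ as $(R\alpha)(X_{1},\ldots,X_{p})=\alpha(RX_{1},\ldots,RX_{p})$, consistent with the formula $d_{R}f(X)=df(RX)$ already used in Lemma \ref{L:B}); the right hand side is a derivation because $d$ and the degree-$0$ tensor derivation $\tau(S)$ are derivations and the graded commutator of derivations is again a derivation, while $r(R)$ is stipulated to be extended as a derivation. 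Since a derivation of $\wedge^{*}(M)$ is determined by its values on $\wedge^{0}(M)$ and $\wedge^{1}(M)$, the whole statement reduces to a check in these two degrees.

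First I would handle $\wedge^{0}(M)$. For $f\in C^{\infty}(M)$, formula $(\ref{tau})$ gives $\tau(S)f=0$, so $[\tau(S),d]f=\tau(S)(df)$, and by definition $\tau(S)(df)(X)=df(SX)=df(RX)-df(X)$. Since $r(R)$ vanishes on $\wedge^{0}$, the right hand side acting on $f$ equals $df+\bigl(df\circ R-df\bigr)=df\circ R$, which matches $d_{R}f$ at once.

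The substantial step is the calculation on $\wedge^{1}(M)$. I would expand the left hand side using the invariant formula for $d$ and the algebra-morphism property of $R$ and $R^{-1}$ to obtain
\[
d_{R}\alpha(X,Y)=(RX)\alpha(Y)-(RY)\alpha(X)-\alpha\bigl(R^{-1}[RX,RY]\bigr).
\]
For the right hand side, the derivation rule for $\tau(S)$ on the $2$-form $d\alpha$ and on the $1$-form $\tau(S)\alpha$, followed by cancellation of the terms $X\alpha(SY)$ and $Y\alpha(SX)$, yields
\[
(d+[\tau(S),d])\alpha(X,Y)=(RX)\alpha(Y)-(RY)\alpha(X)-\alpha\bigl([X,Y]+[SX,Y]+[X,SY]-S[X,Y]\bigr).
\]
Subtracting and using $S=R-\Id$, the required identity reduces to showing that
\[
R^{-1}[RX,RY]-\bigl([X,Y]+[SX,Y]+[X,SY]-S[X,Y]\bigr)=R^{-1}N_{R}(X,Y),
\]
which is verified by substituting the definition of $N_{R}$ and expanding $[RX,RY]=[X,Y]+[X,SY]+[SX,Y]+[SX,SY]$; all bracket terms cancel against those produced by $R^{2}[X,Y]-R[RX,Y]-R[X,RY]$, leaving precisely $R^{-1}N_{R}(X,Y)=R^{-1}[RX,RY]+R[X,Y]-[RX,Y]-[X,RY]$.

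The only real obstacle is bookkeeping: one must keep straight the two competing conventions (the action of $R$ as an algebra morphism on forms, versus its appearance as a $(1,1)$-tensor in the brackets $[SX,Y]$, etc.) and ensure that every Leibniz term produced by $\tau(S)$ combines correctly with the term produced by $dR^{-1}$. Once the $\wedge^{1}$-identity is established, the extension to higher degree forms follows from the derivation property already noted, and no further computation is required.
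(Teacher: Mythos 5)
Your proposal is correct and follows essentially the same route as the paper: reduce to degrees $0$ and $1$ via the derivation property, check the identity trivially on $\wedge^{0}(M)$, and on $\wedge^{1}(M)$ expand both $d_{R}\alpha(X,Y)=(RX)\alpha(Y)-(RY)\alpha(X)-\alpha(R^{-1}[RX,RY])$ and $[\tau(S),d]\alpha$, with the discrepancy being exactly $\alpha(R^{-1}N_{R}(X,Y))$. Your explicit isolation of the identity $R^{-1}[RX,RY]-([X,Y]+[SX,Y]+[X,SY]-S[X,Y])=R^{-1}N_{R}(X,Y)$ is just a cleaner bookkeeping of the same cancellation the paper performs in its displayed chain.
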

\begin{proof} 
It is enough to prove the lemma for $f\in\wedge^{0}(M)$\,and for $\alpha\in\wedge^{1}(M)$\,.
If $f\in\wedge^{0}M)$\,, we have:
$$d_{R}f(X)=df((\Id+S)X)=df(X)+([\tau(S),\,d])f(X)\,;$$
if $\alpha\in\wedge^{1}(M)$\, we have first:
\begin{equation*}
\begin{split}
([\tau(S),\,d]\alpha)(X,\,Y)=&\\
&SX\alpha(Y)-SX\alpha(X)+\\
&\alpha(S[X,\,Y]-[SX,\,Y]-[X,\,SY])\,;
\end{split}
\end{equation*}
then:
\begin{equation*}
\begin{split}
(d_{R}\alpha)(X,\,Y)=&(dR^{-1}\alpha)((\Id+S)X,\,(\Id+S)Y)=\\
&(I+S)X\alpha(Y)-(\Id+S)Y\alpha(X)-\alpha((\Id+S)^{-1}[(\Id+S)X,\,(\Id+S)Y])\\
&X\alpha(Y)+SX\alpha(Y)-Y\alpha(X)-SY\alpha(X)-\alpha((\Id+S)^{-1}[(\Id+S)X,\,(\Id+S)Y])\\
&d\alpha(X,\,Y)+\alpha([X,\,Y])+([\tau(S),\,d]\alpha)(X,\,Y)+\\
&-\alpha(S[X,\,Y]-[SX,\,Y]-[X,\,SY])-\alpha((\Id+S)^{-1}[(\Id+S)X,\,(\Id+S)Y])=\\
&d\alpha(X,\,Y)+([\tau(S),\,d]\alpha)(X,\,Y)+\\
&\alpha((\Id+S)^{-1}((\Id+S)[X,\,Y]-(\Id+S)S[X,\,Y]+(\Id+S)[SX,\,Y]+(\Id+S)[X,\,SY])\\
&-\alpha((\Id+S)^{-1}[(\Id+S)X,\,(\Id+S)Y])=\\
&d\alpha(X,\,Y)+([\tau(S),\,d]\alpha)(X,\,Y)-\alpha((\Id+S)^{-1}N_{S}(X,\,Y)
\end{split}
\end{equation*}
\end{proof}

\n  We  will need the following 
\begin{lemma} \label{nijen} 
$$[d_{R},\,d]=0\,\,\,\Longleftrightarrow\,\,\,N_{R}=0 \, (\,\,\Longleftrightarrow\,\,\,d_{R}=d+[\tau(S),\,d]).$$
\end{lemma}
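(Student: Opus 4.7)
The plan is to build on the decomposition $d_R = d + [\tau(S),d] - r(R)$ from the previous lemma, where $r(R)$ is a zero-order operator that vanishes on $\wedge^0(M)$ and whose action on $\wedge^1(M)$ is the pointwise pairing $\alpha\mapsto\alpha\circ R^{-1}\circ N_R$. Since $r(R)$ is a skew derivation vanishing in degree zero, it is completely determined by its restriction to $\wedge^1(M)$, and (as $R$ is invertible) this restriction vanishes iff $N_R=0$. This already yields the second equivalence: $d_R = d + [\tau(S),d]$ iff $r(R)\equiv 0$ iff $N_R\equiv 0$.

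For the first equivalence I would substitute the decomposition into $[d,d_R]$. The trivial piece $[d,d]=2d^2=0$ vanishes, and $[d,[\tau(S),d]]=0$ follows from a one-line graded-Jacobi computation using only $d^2=0$ (or, more explicitly, by expanding $[\tau(S),d]=\tau(S)d-d\tau(S)$ and noticing that every term contains $d^2$ or cancels in pairs). Hence
$$[d,d_R]\;=\;-[d,r(R)],$$
and it remains to show that a zero-order skew derivation $r(R)$ graded-commutes with $d$ iff it is identically zero.

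The forward implication is immediate. For the converse I would test the commutator on a function $f\in\wedge^0(M)$: since $|r(R)|=1$ and $r(R)f=0$, one has $[d,r(R)]f = d(r(R)f) + r(R)(df) = r(R)(df)$. If this vanishes for every $f$, then by $C^\infty$-linearity of $r(R)$ on $\wedge^1(M)$ together with the fact that any covector at any point $p$ can be realized as $df(p)$ for a suitable $f$ (standard bump-function argument), we obtain $r(R)|_{\wedge^1(M)}\equiv 0$, i.e.\ $N_R\equiv 0$. The only place where one has to be a little careful, and thus the main (mild) obstacle, is precisely this last step: one must invoke the zero-order nature of $r(R)$ to pass from vanishing on exact 1-forms to vanishing on all of $\wedge^1(M)$; no deeper difficulty arises.
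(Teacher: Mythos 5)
Your proposal is correct and follows essentially the same route as the paper: decompose $d_R=d+[\tau(S),d]-r(R)$, kill the $[\tau(S),d]$ term via the graded Jacobi identity and $d^2=0$, and then detect $N_R$ by evaluating the residual commutator $[d,r(R)]$ on functions, using that $r(R)(df)(X,Y)=df(R^{-1}N_R(X,Y))$ and that exact $1$-forms realize every covector pointwise. The only difference is that you spell out the pointwise-realization step that the paper compresses into a single ``clearly''; no substantive divergence.
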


\begin{proof} { 
Let us first show that $d$ commutes with $d+[\tau(S),\,d]$. Since $d^2=0$, $d$ commutes with itself. 
In order to show that  $d$ commutes with  $[\tau(S),\,d]=0,$ we use the Jacobi identity:  $$[d,\,[\tau(S),\,d]]=[[d,\,\tau(S)],\,d]=-[d,\,[\tau(S),\,d]]\ \Longrightarrow \ [d, [\tau(S),\,d]]=0.$$  

(In particular,  the above observation shows the ``$\Longleftarrow$" direction  of the lemma)} 

In order to show that $d$ commutes with $r(R)$ if and only if $N_R=0$, 
we use that,     for every  $f\in\wedge^{0}(M)\,\,,\,\,X,\,Y\in TM$\,, we have,
$$[d,\,d_{R}]f(X,\,Y)=(r(R)df)(X,\,Y)=df(R^{-1}N_{R}(X,\,Y)). $$
Clearly, the right hand side vanishes for all $X,Y$ if and only if $N_R\equiv 0$. \end{proof}

\begin{rem} The previous lemma says that, in $(End(\wedge^{*}(M)),\,[\,,\,],\daleth)$\,,
$$\daleth d_{R}=0\,\,\,\Longleftrightarrow\,\,\,N_{R}=0\,\,\,\Longleftrightarrow\,\,\,d-d_{R}=\daleth\tau(S)\,\,{\rm i.e.}\,\,<d>=<d_{R}>\,.$$ 

Note also that:
$$[d,\,d_{R}]=0$$
$$\Updownarrow$$
$$d_{R}\,\,{\rm satisfies\,\,the\,\, Maurer-Cartan\,\,equation}\,\,\daleth d_{R}+\frac{1}{2}[d_{R},\,d_{R}]=0.$$\end{rem}

\section{$dd_{R}$-lemma in the presence of a Riemannian metric} \label{riem} 

\n Let $g$ be a Riemannian metric on $M$. We denote by $\ast$ the Hodge-star operation. The next two lemmas says that when certain (natural) conditions   on  $R$  are fulfilled, then the $dd_R-$lemma holds.    
\begin{lemma}
Let $R\in { Aut}(TM)$\, such that:
\begin{enumerate}
\item $N_{R}=0$\,( $\stackrel{\textrm{\tiny Lem. \ref{nijen}}}{\Longleftrightarrow}$  $d_{R}=d+[\tau(S),\,d]$\, { $\stackrel{\textrm{\tiny Lem. \ref{nijen}}}{\Longleftrightarrow}$} $[d,\,d_{R}]=0$\,)
\item there exists a Riemannian metric $g$\, on $M$\, such that
\begin{enumerate}
\item[a.] $[d_{R},\,dd^{*}]=0$
\item[b.] $[d_{R},\,d^{*}d]=0$\,. 
\end{enumerate}
\end{enumerate}
Then,   
$$Ker\,d\cap Im\,d_{R}=Im\,dd_{R}$$
\end{lemma}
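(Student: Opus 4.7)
The two inclusions call for rather different arguments. The inclusion $Im\,dd_R\subseteq Ker\,d\cap Im\,d_R$ is immediate: $d^2=0$ takes care of $Ker\,d$-membership, while Lemma \ref{nijen} (via hypothesis (1)) supplies the anticommutation $dd_R=-d_R d$, so that $dd_R\alpha=-d_R(d\alpha)\in Im\,d_R$.

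For the reverse inclusion, my plan is to pick $\beta=d_R\gamma$ with $d\beta=0$ and to Hodge-decompose $\gamma$ with respect to the Laplacian $\Delta=dd^*+d^*d$ attached to $g$,
$$\gamma=H\gamma+dd^*G\gamma+d^*d\,G\gamma,$$
where $H$ and $G$ denote the harmonic projection and Green's operator. The sum of hypotheses (a) and (b) gives $[d_R,\Delta]=0$, from which I would extract in the usual way the formal consequences $[d_R,H]=0$ and $[d_R,G]=0$. Applying $d_R$ to the three summands, the middle one rewrites as $d_R(dd^*G\gamma)=-dd_R(d^*G\gamma)$ by the anticommutation, already manifestly in $Im\,dd_R$; the third one becomes $d_R(d^*d\,G\gamma)=d^*d\,d_R G\gamma=d^*d\,G\beta$ using hypothesis (b) and $[d_R,G]=0$, and this vanishes because $d\beta=0$ gives $d^*d\,G\beta=G\,d^*d\,\beta=0$.

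The main obstacle will be to show that the harmonic contribution $d_R H\gamma$ is zero. My plan here is to use a double characterization. Since $H\gamma$ is $d$-closed and, by Lemma \ref{nijen}, $d_R=d+[\tau(S),d]$, one computes $d_R H\gamma=-d(\tau(S)H\gamma)$, displaying $d_R H\gamma$ as a $d$-exact form; this is the cohomological triviality of $d_R$ (the operator $d_R-d$ is a $d$-supercommutator) made explicit on representatives. On the other hand, $[d_R,\Delta]=0$ together with $\Delta H\gamma=0$ forces $d_R H\gamma$ to be harmonic. A harmonic form that is also $d$-exact must vanish (if $\omega=d\eta$ and $d^*\omega=0$ then $\|\omega\|^2=\langle d\eta,\omega\rangle=\langle\eta,d^*\omega\rangle=0$), so $d_R H\gamma=0$. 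Assembling the three pieces then yields $\beta=dd_R(-d^*G\gamma)\in Im\,dd_R$, which is the desired inclusion.
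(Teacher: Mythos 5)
Your proof is correct, and it is leaner than the paper's while running on the same engine. Both arguments rest on the Hodge decomposition for $g$, on the fact that hypotheses (a) and (b) give $[d_R,\Delta]=0$ and hence $[d_R,H]=[d_R,G]=0$, and on the key observation that $d_R$ applied to a harmonic form is simultaneously harmonic and $d$-exact (because $d_R-d=[\tau(S),d]$ reduces to $-d\tau(S)$ on closed forms), hence zero by the integration-by-parts argument you give. The genuine difference is in how the primitive $\gamma$ with $d_R\gamma=\beta$ is produced: the paper introduces the twisted Laplacian $\Delta_R=[d_R,d_R^*]$, a second Hodge decomposition $I=H_R+\Delta_R G_R$, and works with the canonical primitive $\gamma=d_R^*G_R\alpha$, arriving at the explicit formula $\alpha=dd_RG_R(d_R^*d^*G\alpha)$; you take an arbitrary primitive and decompose that, which is all the set-theoretic equality requires and lets you dispense entirely with $\Delta_R$, $H_R$, $G_R$ and with the commutations between $d$ and the $\Delta_R$-machinery (the paper's assertion that $[\Delta_R,d]=0$ is ``clear'' really corresponds to the extra hypotheses (c), (d) that only appear in Corollary \ref{C:ddl}, so your route stays strictly within the stated assumptions). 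What the paper's version buys is an explicit Green-operator expression for a $dd_R$-potential of $\alpha$ in terms of $\alpha$ itself; what yours buys is a shorter proof with less machinery.
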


\begin{proof} 
Set
$$\Delta_{R}:=[d_{R},\,d_{R}^{*}]\,.$$
Clearly, 
$$[\Delta_{R},\,d]=0=[\Delta,\,d_{R}]\,.$$

Note that $\Delta_{R}=R{\tilde{\Delta}}R^{-1}$\,, where ${\tilde{\Delta}}$\, is the Laplacian operator with respect to ${\tilde{g}}=g(R\cdot,\,R\cdot)$\,. 
\\
consider the Hodge decomposition with respect to $\Delta$\, and $\Delta_{R}$\,:
$$I=H+\Delta G$$
$$I=H_{R}+\Delta_{R}G_{R}\,.$$

Given $\alpha\in Ker\,d\cap Im\,d_{R}$\, we have:
$$\alpha=H(\alpha)+dd^{*}G(\alpha)$$
$$\alpha=d_{R}d_{R}^{*}G_{R}(\alpha)\,.$$
Set $\gamma=d_{R}^{*}G_{R}(\alpha)$\,.
Then, 
$$\gamma=H(\gamma)+dd^{*}G(\gamma)+d^{*}dG(\gamma)$$
and so
$$d_{R}\gamma=d_{R}H(\gamma)+dd^{*}G(d_{R}\gamma)\,,$$
i.e., 
$H(d_{R}\gamma)=$
$$d_{R}H(\gamma)=(d+[\tau(S),\,d])H(\gamma)=-d\tau(r)H(\gamma)=0$$
and so:
$$\alpha=dd^{*}G(\alpha)=d_{R}d_{R}^{*}G_{R}(\alpha).$$
and finally\\
$\alpha=dd^{*}G(\alpha)=d_{R}d_{R}^{*}G_{R}(dd^{*}G(\alpha))=dd_{R}G_{R}(d_{R}^{*}d^{*}G(\alpha))\,, $
\end{proof}

\begin{cor}\label{C:ddl}

Let $R\in End(TM)$\, such that:
\begin{enumerate}
\item $N_{R}=0$\,(and so $d_{R}=d+[\tau(S),\,d]$\, and $[d,\,d_{R}]=0$\,)
\item there exists a Riemannian metric $g$\, on $M$\, such that
\begin{enumerate}
\item[a.] $[d_{R},\,dd^{*}]=0$
\item[b.] $[d_{R},\,d^{*}d]=0$
\item[c.] $[d,\,d_{R}d_{R}^{*}]=0$
\item[d.] $[d,\,d_{R}^{*}d_{R}]=0$\,;
\end{enumerate}
\end{enumerate}
then the $dd_{R}$-lemma holds, i.e.
$$(Ker\,d\cap Ker\,d_{R})\cap(Im\,d+Im\,d_{R})=Im\,dd_{R}$$
\end{cor}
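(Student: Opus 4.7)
The plan is to derive the $dd_{R}$-lemma by combining the conclusion of the previous lemma with its mirror version (obtained by exchanging $d$ and $d_{R}$), followed by a short algebraic manipulation on a decomposition of $\alpha$.

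First, I would observe that the hypotheses are symmetric in $(d, d_{R})$: condition (1), $N_{R}=0$, is equivalent to $[d, d_{R}]=0$ by Lemma \ref{nijen} and therefore does not distinguish the two operators; meanwhile (2a)--(2b) for the pair $(d, d_{R})$ become precisely (2c)--(2d) once the roles of $d$ and $d_{R}$ are interchanged. Hence applying the previous lemma once as stated and once with the two differentials swapped yields
\[
Ker\,d\cap Im\,d_{R}=Im\,dd_{R}\qquad\text{and}\qquad Ker\,d_{R}\cap Im\,d=Im\,d_{R}d.
\]
Since $[d, d_{R}]=0$ forces $dd_{R}=-d_{R}d$, the two right-hand sides coincide and both equal $Im\,dd_{R}$.

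Next, given $\alpha\in (Ker\,d\cap Ker\,d_{R})\cap (Im\,d+Im\,d_{R})$, I would write $\alpha=d\beta+d_{R}\gamma$ and split the problem: from $d\alpha=0$ together with $d^{2}=0$ one obtains $dd_{R}\gamma=0$, so $d_{R}\gamma\in Ker\,d\cap Im\,d_{R}=Im\,dd_{R}$; from $d_{R}\alpha=0$ together with $d_{R}^{2}=0$ one obtains $d_{R}d\beta=0$, so $d\beta\in Ker\,d_{R}\cap Im\,d=Im\,d_{R}d=Im\,dd_{R}$. Summing gives $\alpha\in Im\,dd_{R}$, which is the nontrivial inclusion. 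The reverse inclusion is immediate: any $dd_{R}\eta$ lies in $Im\,d$, is killed by $d$ because $d^{2}=0$, and is killed by $d_{R}$ because $d_{R}d=-dd_{R}$ and $d_{R}^{2}=0$.

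The only point requiring genuine care is checking that the proof of the previous lemma is truly symmetric in $(d, d_{R})$, i.e.\ that nothing in it tacitly privileges $d$ over its twisted copy $d_{R}$. Inspecting the argument, one sees that the ingredients used are only $d^{2}=d_{R}^{2}=0$, $[d, d_{R}]=0$, the existence of the Hodge decompositions associated with $\Delta=[d, d^{*}]$ and $\Delta_{R}=[d_{R}, d_{R}^{*}]$ on the compact $M$, and the commutation relations (2a)--(2b). Interchanging $d$ and $d_{R}$ simply swaps the two Hodge decompositions and replaces (2a)--(2b) by (2c)--(2d), so the same argument runs verbatim. This is where I expect the main (mild) technical obstacle to lie, but it is purely a bookkeeping verification rather than a new computation.
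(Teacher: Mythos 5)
Your proposal is correct and is precisely the argument the paper intends: the corollary is stated without proof as an immediate consequence of the preceding lemma, whose hypotheses (2c)--(2d) are exactly the $(d,d_{R})$-swapped versions of (2a)--(2b), and your splitting of $\alpha=d\beta+d_{R}\gamma$ together with $dd_{R}=-d_{R}d$ supplies the omitted bookkeeping. No gap.
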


\section{$dd_R-$lemma in the  almost symplectic setting} \label{scew} 
\n Let $(M,\kappa)$\,be an almost symplectic, $2n$-dimensional compact manifold. We consider 
$${\mathcal{M}}_{\kappa}(M):=\left\{g\in{\mathcal{R}}iem(M)\,|\,d\mu(g)=\frac{\kappa^{n}}{n!}\right\}\,.$$

Recall (cf. \cite{PdB1})\,that we can define the symplectic analog of the Hodge-star  
$$\bigstar\,:\,\wedge^{r}(M)\,\lr\,\wedge^{2n-r}(M)$$
by means of the relation
$$\alpha\wedge\bigstar\beta=\kappa(\alpha,\,\beta)\frac{\kappa^{n}}{n!}$$
for $\alpha,\,\beta\in\wedge^{r}(M)$. 

Analog to the Riemannian case, we  consider on $\wedge^{r}(M)$\,:
$$d^{\bigstar}:=(-1)^{r}\bigstar d\bigstar\,.$$

\n For any 
$$g\in{\mathcal{M}}_{\kappa}(M)$$
there exists $R\in End(TM)$\,such that
$$g(X,\,Y)=\kappa(R^{-1}X,\,Y)\,.$$
Clearly,  $R$\,and $R^{-1}$\, are $g$-antisymmetric and $det\,R\equiv 1$\,;
\\
on $\wedge^{r}(M)$\, we have (cf. \cite{PdB2}\,):

$$\bigstar R^{-1}=(-1)^{r}*\,\,\,{\rm i.e}\,\,*R=(-1)^{r}\bigstar\,\,{\rm and}\,\,R^{-1}*=\bigstar\,;$$
consequently:
$$d_{R}^{*}=-R^{-1}*d*R=-d^{\bigstar}.$$

\n We have the following 
\begin{lemma}

Assume
\begin{itemize}
\item $N_{R}\equiv 0$
\item $d_{R}[d,\,d^{\bigstar}]=0=[d,\,d^{\bigstar}]d_{R}$
\item $d_{R^{-1}}[d,\,d^{\bigstar}]=0=[d,\,d^{\bigstar}]d_{R^{-1}}$
\end{itemize}

\n then, the $dd_{R}$-lemma holds.
\end{lemma}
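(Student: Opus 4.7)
My plan is to reduce to Corollary \ref{C:ddl} by verifying its four commutation hypotheses (2a)--(2d). The bridge between the symplectic hypotheses and those of the corollary is the identity $d_R^* = -d^{\bigstar}$ (already noted in the section) for the Riemannian adjoint with respect to the chosen $g \in \mathcal{M}_{\kappa}(M)$. Since $R$, and hence $R^{-1}$, is $g$-antisymmetric on forms, this yields the companion identity $d^* = -R d^{\bigstar} R^{-1}$. Setting $L := dd^{\bigstar}+d^{\bigstar}d$, the last two hypotheses of the lemma read $d_R L = L d_R = d_{R^{-1}} L = L d_{R^{-1}} = 0$.

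I will also use $[d, d_R] = 0$ and $[d, d_{R^{-1}}] = 0$: the first is Lemma \ref{nijen} applied to $N_R = 0$; the second follows because the formula $N_{R^{-1}}(X,Y) = R^{-2}N_R(R^{-1}X, R^{-1}Y)$ recalled in Section 2 gives $N_{R^{-1}} = 0$. Finally, the identities $dR = R d_{R^{-1}}$ and $R^{-1}d = d_{R^{-1}} R^{-1}$ are immediate from $d_{R^{-1}} = R^{-1} d R$.

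Conditions (2c) and (2d) of Corollary \ref{C:ddl} then fall out directly. Writing $d_R d_R^* = -d_R d^{\bigstar}$ and using $dd_R = -d_R d$,
\[
[d,\,d_R d_R^*] \;=\; -d d_R d^{\bigstar} + d_R d^{\bigstar} d \;=\; d_R(dd^{\bigstar} + d^{\bigstar}d) \;=\; d_R L \;=\; 0,
\]
and an analogous manipulation gives $[d, d_R^* d_R] = -L d_R = 0$. For (2a) and (2b), substituting $d^* = -R d^{\bigstar} R^{-1}$ and pushing $d$ past $R^{\pm 1}$ produces
\[
dd^* = -R d_{R^{-1}} d^{\bigstar} R^{-1}, \qquad d^* d = -R d^{\bigstar} d_{R^{-1}} R^{-1}.
\]
Conjugation by $R$ then yields $[d_R, dd^*] = -R\,[d, d_{R^{-1}} d^{\bigstar}]\,R^{-1}$; expanding and using $d d_{R^{-1}} = -d_{R^{-1}}d$ collapses the inner bracket to $-d_{R^{-1}} L = 0$, and similarly $[d_R, d^*d] = -R L d_{R^{-1}} R^{-1} = 0$. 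All four hypotheses of Corollary \ref{C:ddl} are thus satisfied, and the $dd_R$-lemma follows.

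The main obstacle is essentially bookkeeping rather than a conceptual hurdle: one must correctly set up the dictionary $d^* = -R d^{\bigstar} R^{-1}$ and keep track of signs through the graded commutators. Once that is done the four commutativities of Corollary \ref{C:ddl} reduce mechanically to the four vanishings $d_R L = L d_R = d_{R^{-1}} L = L d_{R^{-1}} = 0$ posited in the statement, each used exactly once.
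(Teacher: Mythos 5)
Your proposal is correct and follows essentially the same route as the paper: both reduce the statement to the four commutation hypotheses of Corollary \ref{C:ddl} via the identity $d_R^{*}=-d^{\bigstar}$, turning $[d,\,d_Rd_R^{*}]$ and $[d,\,d_R^{*}d_R]$ into $\pm$ the products of $d_R$ with $[d,\,d^{\bigstar}]$, and $[d_R,\,dd^{*}]$, $[d_R,\,d^{*}d]$ into conjugates of the products of $d_{R^{-1}}$ with $[d,\,d^{\bigstar}]$. Your write-up is in fact more explicit than the paper's (which leaves a ``$\pm$'' and a ``similarly''), and your justification of $[d,\,d_{R^{-1}}]=0$ via $N_{R^{-1}}=0$ correctly supplies a step the paper uses implicitly.
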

\begin{proof}
We have:
$$[d,\,d_{R}d_{R}^{*}]=dd_{R}d_{R}^{*}-d_{R}d_{R}^{*}d=-d_{R}[d,\,d_{R}^{*}]=d_{R}[d,\,d^{\bigstar}]$$
and, similarly:
$$[d,\,d_{R}^{*}d_{R}]=-[d,\,d^{\bigstar}]d_{R}\,;$$
finally, we have:
$$[d_{R},\,dd^{*}]=-d[d^{*},d_{R}]$$
and
$$R^{-1}d[d^{*},\,d_{R}]R=\pm d_{R^{-1}}[d^{\bigstar},\,d].$$ 
Repeating this procedure with  the other relation and  applying  Lemma \ref{C:ddl} we obtain  that $dd_R-$lemma holds.  \end{proof}

\begin{rem}  If $\kappa$\, defines a symplectic structure, i.e. $d\kappa=0$\,, then $[d,\, d^{\bigstar}]=0$\, (cf. e.g. \cite{PdB2}\,),
 and so we only need $N_{R}\equiv 0$\.. \end{rem}

\section{Relaxing the condition $J^2=-\Id$ in the  definition of K\"ahler manifold.}
One of the equivalent definitions of the Kähler manifold is the following one: A Kähler manifold is 
a symplectic manifold $(M, \kappa)$ equipped with  $J\in End(TM)$ such that 
 the bilinear form  $g$\, defined by the  equality 
$g(X,\,Y):=\kappa(X,\,JY)$ is a Riemannian metric and such that \\
${I}$\hspace{1cm}\weg{$\bullet$}\, $J^{2}=-\Id$\\
${II}$\hspace{1cm}\weg{$\bullet$}\, $N_{J}=0$. 

 A lot of papers study the consequences of   
 relaxing the second condition $N_{J}=0$. In this case, the structure $J$ is called an {\it almost complex structure}, and many papers are dedicated to almost complex structures satisfying 
  additional conditions,   see for example \cite{Grey}.

 What about relaxing the first condition?

\begin{thm} \label{5.1} 
Let $(M,\kappa)$\,be an almost symplectic, $2n$-dimensional connected  manifold;\\
let again 
\begin{equation} \label{*} {\mathcal{M}}_{\kappa}(M):=\left\{g\in{\mathcal{R}}iem(M)\,|\,d\mu(g)=\frac{\kappa^{n}}{n!}\right\}\,.\end{equation}
Assume there exists $g\in{\mathcal{M}}_{\kappa}(M)$\, such that, representing $g$\,via $\kappa$ by $R\in End(TM)$\,, i.e. for $R$ satisfying 
$$g(X,\,Y)=\kappa(RX,\,Y)\,,$$
we have 
$$N_{R}\equiv 0\,.$$
Then, the orthogonal component $J$\, of $R$\, in its $g$-polar decomposition  is $g-$scew-symmetric and  satisfies
$$N_{J}\equiv 0\,.$$
Moreover, if $d\kappa=0$\,, then $(M,\,g,\,J)$\, is a K\"ahler manifold.
\end{thm}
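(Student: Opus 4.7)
The overall plan is to reduce the vanishing $N_{J}\equiv 0$ to a purely spectral consequence of $N_{R}\equiv 0$, and then to extract the K\"ahler structure from the additional hypothesis $d\kappa=0$. I would first set up the polar decomposition: since $R$ is $g$-antisymmetric and invertible, it is $g$-normal, so $R=PJ$ with $P=\sqrt{-R^{2}}$ positive $g$-symmetric, $J=P^{-1}R$ $g$-orthogonal, and $P$, $J$ commuting. Pointwise, every $g$-antisymmetric operator is $g$-conjugate to a block diagonal form whose $2\times 2$ blocks are $\lambda_{j}$ times the standard skew block, with $\lambda_{j}>0$ and $\prod_{j}\lambda_{j}=1$ (because $\det R=1$). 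This immediately yields $J^{2}=-\Id$ and $J^{t}=-J$, and hence $g(JX,JY)=g(X,Y)$, i.e.\ $g$ is Hermitian with respect to $J$.

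For the main step, I would pass to $TM\otimes\mathbb{C}$ and work on the dense open subset of $M$ where $R$ has locally constant algebraic type. There the $+i$-eigendistribution $V^{1,0}$ of $J$ splits as $\bigoplus_{j}V_{i\lambda_{j}}$ into $R$-eigenspaces with positive imaginary eigenvalue, and since $N_{J}(X,Y)=-4\,\pi^{0,1}[X,Y]$ for $X,Y\in V^{1,0}$ the identity $N_{J}\equiv 0$ is equivalent to $[V^{1,0},V^{1,0}]\subset V^{1,0}$. For eigenvector sections $X\in V_{i\lambda_{j}}$ and $Y\in V_{i\lambda_{k}}$, a direct calculation tracking the Leibniz terms produced by treating $\lambda_{j},\lambda_{k}$ as functions yields
\[
N_{R}(X,Y)=(R-i\lambda_{j})(R-i\lambda_{k})[X,Y]+(\lambda_{k}-\lambda_{j})\bigl(X(\lambda_{k})\,Y+Y(\lambda_{j})\,X\bigr).
\]
Decomposing $[X,Y]=\sum_{\nu}Z_{i\nu}$ into $R$-eigencomponents, the first term acts by $-(\nu-\lambda_{j})(\nu-\lambda_{k})$ on $Z_{i\nu}$, while the Leibniz corrections lie entirely in $V_{i\lambda_{j}}\oplus V_{i\lambda_{k}}$. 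Hence $N_{R}(X,Y)=0$ forces $Z_{i\nu}=0$ for every $\nu\notin\{\lambda_{j},\lambda_{k}\}$, so that $[X,Y]\in V^{1,0}$. Since such eigenvector sections locally span $V^{1,0}$, involutivity follows on the open set, and $N_{J}\equiv 0$ extends to all of $M$ by continuity of the tensor $N_{J}$.

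For the K\"ahler conclusion, the key observation is that $R$ being $\kappa$-antisymmetric (together with the functional-calculus construction of $P$ and $J$) forces $J$ $\kappa$-antisymmetric and $P$ $\kappa$-symmetric; hence $\kappa(JX,JY)=-\kappa(X,J^{2}Y)=\kappa(X,Y)$, so $\kappa$ is of type $(1,1)$ with respect to $J$. The fundamental form $\omega(X,Y):=g(JX,Y)$ satisfies $\omega=-\kappa(P\cdot,\cdot)$; its closedness should follow from $d\kappa=0$ together with the ``eigenvalue parallelism'' already extracted in the second step ($X(\lambda_{k})=0$ for $X\in V_{i\lambda_{j}}$ with $j\neq k$), which controls precisely how $P$ interacts with $\kappa$. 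Combined with $N_{J}\equiv 0$ and the $J$-Hermiticity of $g$, this is exactly the K\"ahler property.

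The main obstacle, I expect, is the spectral bookkeeping in the second step: making sure that the Leibniz terms $X(\lambda_{k})\,Y$ and $Y(\lambda_{j})\,X$ land precisely in $V_{i\lambda_{k}}$ and $V_{i\lambda_{j}}$ so as to be absorbed by the principal term, and managing the exceptional locus where eigenvalues of $R$ collide (to be handled by continuity of $N_{J}$). The K\"ahler step is then largely a matter of converting the parallelism constraints into the identity $d\omega=0$.
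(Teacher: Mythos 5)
Your route to $N_J\equiv 0$ is genuinely different from the paper's, and for that conclusion it works. The paper exhibits $J$ as a real \emph{odd} polynomial in $R$ with constant coefficients (Lagrange interpolation at the points $\pm i\lambda_j$), deduces $N_J\equiv0$ from $N_R\equiv0$ by invoking the splitting/gluing lemma of Bolsinov--Matveev, and gets its block structure from Haantjes' theorem applied to $-R^2$. You instead complexify, write $V^{1,0}=\bigoplus_j V_{i\lambda_j}$, and prove involutivity of $V^{1,0}$ directly from the identity
$$N_R(X,Y)=(R-i\lambda_j)(R-i\lambda_k)[X,Y]+(\lambda_k-\lambda_j)\bigl(X(\lambda_k)\,Y+Y(\lambda_j)\,X\bigr),$$
which I have checked and which is correct; since $-( \nu-\lambda_j)(\nu-\lambda_k)\neq0$ for every eigenvalue parameter $\nu\in\{\pm\lambda_1,\dots,\pm\lambda_m\}$ outside $\{\lambda_j,\lambda_k\}$ (all $\lambda$'s being positive), and the correction terms sit inside $V_{i\lambda_j}\oplus V_{i\lambda_k}$, the bracket stays in $V^{1,0}$, and the extension across the non-stable locus by continuity of the tensor $N_J$ is legitimate. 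This is more self-contained than the paper's argument (no external splitting lemma, no Haantjes coordinates for this step), at the cost of doing the spectral bookkeeping by hand.

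The gap is in the closedness of $\omega:=g(J\cdot,\cdot)$. First, you never conclude that the eigenvalues $\lambda_j$ are \emph{constant}: your parallelism relations give $X(\lambda_k)=0$ only for $X$ in eigendistributions $D_j$ with $j\neq k$, leaving $\lambda_k$ free to vary along its own distribution $D_k$. This is precisely where the volume normalization $d\mu(g)=\kappa^n/n!$ must enter: it gives $\det R=\prod_j\lambda_j^{2k_j}\equiv1$, and differentiating along $X\in D_k$, where all other factors are already known to be annihilated, forces $X(\lambda_k)=0$ too. Without constancy the conclusion is false at the level of your formula: $\omega=-\kappa(P\cdot,\cdot)=-\sum_j\lambda_j\kappa_j$ gives $d\omega=-\sum_j d\lambda_j\wedge\kappa_j-\sum_j\lambda_j\,d\kappa_j$, and already for $m=1$ the first sum is $-d\lambda_1\wedge\kappa$, which does not vanish for nonconstant $\lambda_1$. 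Second, even granting constancy, you still need that $d\kappa=\sum_j d\kappa_j=0$ forces $\sum_j\lambda_j\,d\kappa_j=0$; the paper gets this because in the Haantjes coordinates the $d\kappa_j$ occupy disjoint sets of basis $3$-forms. An invariant substitute is available from what you have (using $[D_i,D_j]\subset D_i\oplus D_j$ and the $\kappa$-orthogonality of distinct blocks, one checks that $d\kappa_j$ vanishes on any triple of vectors of which at most one lies in $D_j$, so at most one summand is nonzero on each triple), but ``largely a matter of converting the parallelism constraints into $d\omega=0$'' is not that argument; as written, the K\"ahler step is a plan rather than a proof.
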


\begin{proof} 
The proof is organized as follows: we will first show that  the  orthogonal component $J$\, of $R$\, in its $g$-polar decomposition is actually a polynomial of $R$ (we will also see that the polynomial is real and odd). The property $N_J=0$ will then follow from $N_R=0$ by \cite{BM}. The closedness of the form $g(J \cdot ,\cdot) $ will require certain additional work.

We consider $-R^2:= - R\circ R$. It is clearly 
 self adjoint and positively definite 
   with respect to $g$; by \eqref{*} we have $ \det(R^2)= \textrm{const}$. 
   Then, it is semi-simple, and all its eigenvalues are positive by linear algebra.
  
  We denote by  $m(x)$ the number of different eigenvalues of $-R^2$ at $x\in M$  and 
   by $\lambda_{1}(x)^2> ...>  \lambda_{m(x)}(x)^2\,\,\,(\lambda_{j}>0\,\, ,\,\,1\leq j\leq m(x))$ the   eigenvalues of $-R^2$ at $x\in M$. 
  
   We say that a point $x\in M$\, is  {\em stable}\,\,if $m(x)$\, is constant in a neighborhood of $x$\,. By \cite[Lemma 4]{MT},  the set of stable points is open and  everywhere dense on $M$.   Later, we will even show that all points are stable. 
We shall first  work near a stable point $x$.

    By \cite[Lemma 6]{BM}, the Nijenhuis tensor $N_{-R^2}=0$.  By \cite{H}, in the neighborhood of $x$ there exists a coordinate system  $\bar x= \left(\bar x_1= (x_1^1,,...,x_1^{2k_1}),..., \bar x_m =(x_m^1,...,x_m^{2k_m})\right) $ such that in this coordinate system the matrix of $-R^2$ is block diagonal, the dimensions of the blocks are $2k_1,...,2k_m$, and such that the $jth$ block is $\lambda_j^2$ times the identity $ 2k_j\times 2k_j$-matrix:

   \begin{equation}\label{L}
-R^2= \begin{pmatrix} \lambda^2_1 \cdot \Id_{2k_1}& & \\& \ddots & \\ &&\lambda^2_m\cdot \Id_{2k_m}  \end{pmatrix}.\end{equation}
Moreover, the function $\lambda_j$ does not depend on  the variables $x_i^\ell$ for $j\ne i$.

This in particular implies that all eigenvalues $\lambda_i$ are actually constant: indeed, from \eqref{L} we know that 
 the determinant 
of $-R^2$ is the product  $(\lambda_1)^{2k_1}\cdot ...\cdot (\lambda_m)^{2k_m}$. By assumption, the determinat is constant. Since the functions $\lambda_i$ depend on its own variables, all functions $\lambda_i$ must be constant. Then, all points  must be stable as we claimed before.

\begin{rem} For further use let us note that,  since the eigenspaces of $R$ corresponding to different eigenvalues are orthogonal, in  
 these coordinates the matrix of $g $ is also block-diagonal with the same as in \eqref{L} dimensions of the blocks; by construction, the components of $R$ are also orthogonal with the same   dimensions of the blocks  \begin{equation} \label{g} 
g= \begin{pmatrix} g_1 & & \\& \ddots & \\ &&g_m  \end{pmatrix}\ , \ \  R= \begin{pmatrix}R_1 &  \\& \ddots & \\ &&R_m  \end{pmatrix}
 \  .\end{equation}
\end{rem}  

 Let us now cook with the help of $R$ the field of endomorphisms $J$ such that it is the orthogonal component  $R$  in its $g-$polar decomposition.  We take the   polynomial $P(X) = a_{2m-1} X^{2m-1}+...+a_0$  of degree at most $2m-1$ 
  such that its value at the points    $X=\cplxi \lambda_1,  ... , \cplxi \lambda_m$ is equal to $\cplxi $ and such that  its value at the points    $X=-\cplxi \lambda_1,  ... , -\cplxi \lambda_m$ is equal to $-\cplxi $.  From general theory  it follows that such polynomial is unique (since the values in $2m$ points determine a unique  polynomial of degree $2m-1$, see  \cite[\S 2 Ch. 1]{Bach}). Since $P(\overline X)= \overline{P(X)}$ for $2m$ points $  X= \pm\cplxi \lambda_1,  ... , \pm\cplxi \lambda_m $, the coefficients of the 
   polynomial are real. Since $P(- X)= - {P(X)}$   for $2m$ points $  X= \pm\cplxi \lambda_1,  ... , \pm\cplxi \lambda_m $, the polynomial is odd (i.e., all terms of even degree are zero). 
   
 We would like to point out that,  since $ \lambda_i$ are constant, the coefficients of the polynomial are constant.

  We now consider $J:= P(R)= a_{2m-1} R^{2m-1}+...+ a_1\cdot R$ (we understand $R^{r}$ as 
  $\underbrace{R\circ R\circ  ....\circ R}_{\textrm{\tiny $r$ times}}$). Let us show that $J$ is indeed 
  the orthogonal component  of $R$ in its $g-$polar decomposition.

  Evidently, the eigenvalues of $J$ are $P(\pm  \cplxi \lambda_i)=  \pm \cplxi  $, and the algebraic multiplicity of each eigenvalue coincides with its geometric  multiplicity. Then, $J^2 =-{\Id}$. 
   
  Now,  since the polynomial $P$ is even, the bilinear form  $g(J\cdot, \cdot) $ is scew-symmetric. Indeed,  all terms of the polynomial  of even degree are zero, and 
  for every term 
  of odd degree we have  
  $$g(a_{2\ell -1}R^{2\ell-1}(U), V) =    -g(a_{2\ell -1}R^{2\ell-2} (U), R(V))=  
  g(a_{2\ell -1}R^{2\ell-3} (U), R^2(V))= ...=-g(U,a_{2\ell -1}R^{2\ell-1}(V) ) $$
  (each  time  we transport one $R$ to the right hand side  we change the sign; all together we  make odd number  the sign change).       
Then, each term $g(a_{2\ell -1}R^{2\ell-1}\cdot , \cdot )$ is skew-symmetric implying $g(J\cdot, \cdot) $ is scew-symmetric as well. 

Then, 
   $J$ is a $g-$orthogonal operator. Indeed, 
   $$g(JV,\underbrace{JU}_{U'} )=-g(J \underbrace{J U}_{U'}, V)  = g(U,V)= g(V,U).$$

Now, the operator $R\cdot J= R\cdot P(R)$ is $g-$symmetric (impyling $R= SJ$ for a certain  $g-$symmetric operator $S$). Indeed, arguing as above, we have 
   $$g(a_{2\ell -1}R^{2\ell}(U), V) =    -g(a_{2\ell -1}R^{2\ell-1} (U), R(V))=  
  g(a_{2\ell -1}R^{2\ell-3} (U), R^2(V))= ...=g(U,a_{2\ell -1}R^{2\ell}(V) ) $$
   (this time we transport $2\ell$ $R$'s from left to right, so we change the sign even number of times).
    Finally, $J=P(R)$ satisfies the following  properties:
    \begin{itemize}\item  It is $g-$orthogonal, 
\item $R= SJ$ for a certain  $g-$symmetric operator.
    \end{itemize}
    Thus, $J$ 
     is    the orthogonal component  of $R$ in its $g-$polar decomposition.
  
  Our goal is to show that $(g,J)$ is a Kähler structure on $M$ provided $\kappa$ s closed. We already have seen that $J$ is $g-$skew-symmetric.  The property  $N_J\equiv 0$ follows from \cite[Lemma 6]{BM}. 
   
Let us now  prove prove that the  form  $g(J\cdot , \cdot) $ is also closed. 
We will work locally, in a coordinate system $\bar x$ constructed above.
Combining these  with  the form  \eqref{L} of $-R^2$,
 we obtain that the matrix of  $J$ is given by  
 \begin{equation}\label{J}
J= \begin{pmatrix} \tfrac{1}{\lambda_1}R_1 & & \\& \ddots & \\ &&\tfrac{1}{\lambda_m}R_m  \end{pmatrix}\end{equation}

 Combining \eqref{L}
and \eqref{g} we see that the matrix of $\kappa(\cdot, \cdot):= g(R\cdot , \cdot )$  (in the coordinate system $ \bar x$ above)  is  given by the matrix 

\begin{equation}\label{omega}
\kappa= \begin{pmatrix} \om{1} & & \\& \ddots & \\ &&\om{m}  \end{pmatrix}= \begin{pmatrix} -R_1g_1 & & \\& \ddots & \\ &&-R_mg_m  \end{pmatrix}.\end{equation}

Then, by \eqref{J}, the matrix of $g(J\cdot , \cdot )$ is 
\begin{equation}\label{Omega}
 \begin{pmatrix} -Jg_1 & & \\ & \ddots & \\ &&-Jg_m  \end{pmatrix} = \begin{pmatrix} -\tfrac{1}{\lambda_1}R_1g_1 & & \\& \ddots & \\ &&-\tfrac{1}{\lambda_m}R_mg_m \end{pmatrix}  = \begin{pmatrix} \tfrac{1}{\lambda_1}\om{1} & & \\& \ddots & \\ &&\tfrac{1}{\lambda_m}\om{m}  \end{pmatrix}.\end{equation}

  In what follows we will us the convention $$\bar x= \left(\bar x_1= (x_1^1,,...,x_1^{2k_1}),..., \bar x_m =(x_m^1,...,x_m^{2k_m})\right)=\left(y^1,...,y^{2n}\right),   $$ 
  i.e., $y^1:=x_1^1 ,...,y^{2k_1}:=x_1^{2k_1}, y^{2k_1+1}:=x_2^1,..., y^{2n}:=x_m^{2k_m}.$

  Now we use that the differential of the form  is given by 
  $$d\left(\sum_{p,q=1}^{2n}{\kappa_{pq}} dy^{p}\wedge  dy^{q}\right)= \sum_{p,q,s=1}^{2n} \left(\tfrac{\partial}{\partial y^s} \kappa_{pq}\right) dy^s\wedge   dy^{p}\wedge  dy^{q}. $$ If the matrix of the 
  form $\kappa $ is as in \eqref{omega}, i.e., if 
  
 $$\kappa = \underbrace{\sum_{\alpha, \beta =1}^{2k_1}{ \om{1}_{\alpha \beta}}dx_1^{\alpha }\wedge  dx_1^{\beta}}_{\om{1}} + ...+ \underbrace{\sum_{\alpha, \beta =1}^{2k_m}{\om{m}_{\alpha \beta}}dx_m^{\alpha }\wedge  dx_m^{\beta}}_{\om{m}} , $$
  then, the  differential of $\kappa$ is 
  $$
  d\kappa = d\om{1} + ...+ d\om{m}= 
     \sum_{i=1}^m \left(\underbrace{\sum_{p=1}^{2n}\sum_{\alpha, \beta=1}^{2k_i} \left( \tfrac{\partial }{\partial y^p}\om{i}_{\alpha\beta}\right)  dy^{p}\wedge dx_i^{\alpha }\wedge  dx_i^{\beta}}_{d\om{i}}\right). 
  $$
  We see that the  components of the differentials of $d\om{i}$ and $d\om{j}$ do not combine for $i\ne j$. Indeed, every component of $d\om{i}$ is proportional to a certain $dy^{p}\wedge dx_i^{\alpha }\wedge  dx_i^{\beta}$,  and every component of $d\om{j}$ is proportional to a certain $dy^{p}\wedge dx_j^{\alpha }\wedge  dx_j^{\beta}$. Then, $d\kappa =0 $ implies $d\om{i}=0$ for all $i$.

  Now, by \eqref{Omega}, the form $g(J\cdot , \cdot )$ is given by 
  \begin{equation} \label{om1} 
  \tfrac{1}{\lambda_1}\om{1} + ...+ \tfrac{1}{\lambda_1}\om{m}. 
 \end{equation} 
 Since $\lambda_i$ are constants as we explained above,  and $d\om{i}=0$,  then 
  the differential of \eqref{om1} vanishes. Thus,   $g(J\cdot , \cdot )$ is closed as we claim. Theorem \ref{5.1} is proved.  \end{proof}

\begin{definition}
Let $(M\,\kappa)$\, be a $2n$-dimensional (compact) symplectic manifold;\\
$R\in End(TM)$\,is said to be $\kappa$-{\em calibrated}\, if
$$g:=\kappa(R\cdot,\,\cdot)$$
is a Riemannian metric such that $d\mu(g)=\frac{\kappa^{n}}{n!}$\,.
\end{definition}

From Theorem \ref{5.1} we immediately obtain 
\begin{cor}
Let $(M\, , \kappa)$\, be a $2n$-dimensional { connected}  symplectic manifold. \\
Then the following statements are equivalent:
\begin{itemize}
\item $(M\, ,  \kappa)$  admits a Kähler structure $g, J$ such that 
$\kappa(\cdot, \cdot) =g(J \cdot, \cdot)$. 
 \item there exists $R\in End(TM)$\, such that it is  $\kappa$-calibrated and such that $N_{R}\equiv 0$.
\end{itemize}
\end{cor}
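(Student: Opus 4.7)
The plan is to prove the two implications separately. The direction $\Rightarrow$ is essentially immediate: given a K\"ahler structure $(g, J)$ with $\kappa = g(J\cdot, \cdot)$, set $R := -J$ (which equals $J^{-1}$ since $J^2 = -\Id$). Then $\kappa(RX, Y) = -g(J^2 X, Y) = g(X, Y)$, so $g = \kappa(R\cdot, \cdot)$; the identity $d\mu(g) = \kappa^n/n!$ is a standard property of K\"ahler metrics when the K\"ahler form is $\kappa$; and $N_R = N_{-J} = N_J = 0$ by a direct unpacking of the definition of the Nijenhuis tensor (the substitution $S \mapsto -S$ leaves each of the four terms unchanged).

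For the direction $\Leftarrow$, I would first apply Theorem \ref{5.1} to produce the polynomial almost complex structure $J = P(R)$ with $J^2 = -\Id$ and $N_J \equiv 0$. A subtlety arises here: Theorem \ref{5.1} delivers a K\"ahler structure on $M$, but the resulting K\"ahler form $g(J\cdot, \cdot) = \sum_i \lambda_i^{-1}\om{i}$ (in the block notation of that proof) differs from $\kappa = \sum_i \om{i}$ whenever some $\lambda_i \ne 1$. To produce a K\"ahler structure whose K\"ahler form is $\kappa$ itself, I would introduce the rescaled Riemannian candidate $\tilde g(X, Y) := \kappa(JX, Y)$ and set $J' := -J$, then verify that $(\tilde g, J')$ is K\"ahler with K\"ahler form exactly $\kappa$.

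The verifications reduce to block-wise computations in the adapted coordinates of Theorem \ref{5.1}. Symmetry of $\tilde g$ is equivalent to $\kappa$-antisymmetry of $J$, which on block $i$ follows immediately from $J = \lambda_i^{-1} R$ together with the $\kappa$-antisymmetry of $R$ (the latter is built into the calibration identity $g(X,Y)=\kappa(RX,Y)$ and symmetry of $g$). Positive definiteness follows from $\tilde g(X, X) = g(X, X)/\lambda_i > 0$ on block $i$. The remaining checks are routine: $(J')^2 = -\Id$, $N_{J'} = N_J \equiv 0$, $J'$ is $\tilde g$-orthogonal, and $\tilde g(J'X, Y) = -\kappa(J^2 X, Y) = \kappa(X, Y)$ by a direct calculation. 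Combined with the assumption $d\kappa = 0$, this shows $(\tilde g, J')$ is K\"ahler with K\"ahler form $\kappa$. The main conceptual point, and really the only obstacle, is recognizing that Theorem \ref{5.1} does not \emph{a priori} produce a K\"ahler structure calibrating $\kappa$ itself, and identifying the correct tensorial rescaling $\tilde g = \kappa(J\cdot, \cdot)$ (block-wise, division of $g$ by $\lambda_i$) that remedies the discrepancy.
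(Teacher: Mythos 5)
Your proposal is correct, and it is in fact more complete than the paper's treatment: the paper derives this corollary from Theorem \ref{5.1} with no further argument (``we immediately obtain''), whereas you rightly notice that Theorem \ref{5.1} alone does not deliver the first bullet as stated. The forward implication is the standard one (take $R=-J=J^{-1}$, note $N_{-J}=N_J$ and that $\kappa^{n}/n!$ is the Riemannian volume form of an almost Hermitian metric), and for the converse you invoke Theorem \ref{5.1} exactly as the paper does. The genuinely useful addition is your observation that the K\"ahler structure $(g,J)$ produced by Theorem \ref{5.1} has fundamental form $g(J\cdot,\cdot)=\sum_i\lambda_i^{-1}\om{i}$ (up to sign) rather than $\kappa=\sum_i\om{i}$, and the volume normalization only forces $\prod_i\lambda_i^{2k_i}=1$, not $\lambda_i=1$; so a correction is really needed to meet the requirement $\kappa=g(J\cdot,\cdot)$. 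Your tensorial fix $\tilde g:=\kappa(J\cdot,\cdot)$, $J':=-J$ works: since $g$, $R$, and hence $\kappa$ are all block-diagonal in the adapted coordinates (eqs.\ \eqref{g}, \eqref{omega}) and $J=\lambda_i^{-1}R_i$ on the $i$-th block, $\tilde g$ is block-wise $\lambda_i^{-1}g$, hence a Riemannian metric; $(J')^2=-\Id$, $N_{J'}=N_J\equiv 0$, and $\tilde g(J'\cdot,\cdot)=-\kappa(J^2\cdot,\cdot)=\kappa$ is closed by hypothesis. In short, your route is the paper's route plus the one rescaling step the paper glosses over, and that step is carried out correctly.
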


 \subsubsection*{Acknowledgements} We thank A. Ghigi, G. Manno and F. Magri for  useful  discussions. The work was start at the   Milano Bicocca University; we thank it  for the hospitality. 
The stay of  V.M. at Milano Bicocca was supported by   GNSAGA. V.M. also thanks Deutsche Forschungsgemeinschaft
(Priority Program 1154 --- Global Differential Geometry and Research Training Group   1523 --- Quantum and Gravitational Fields),
  and FSU Jena for partial financial support.

\n Vladimir S. Matveev: Institute of Mathematics, FSU Jena, 07737 Jena Germany, \\  vladimir.matveev@uni-jena.de
\end{document}